\documentclass[12pt,reqno]{article}

\usepackage{color}
\usepackage{fullpage}
\usepackage{float}

\definecolor{webgreen}{rgb}{0,.5,0}
\definecolor{webbrown}{rgb}{.6,0,0}

\usepackage[colorlinks=true,
linkcolor=webgreen,
filecolor=webbrown,
citecolor=webgreen]{hyperref}

\usepackage{amsmath,amssymb}
\usepackage{amsthm}
\usepackage{amsfonts}
\usepackage{latexsym}
\usepackage{epsf}

\usepackage{qtree}

\setlength{\textwidth}{6.5in}
\setlength{\oddsidemargin}{.1in}
\setlength{\evensidemargin}{.1in}
\setlength{\topmargin}{-.1in}
\setlength{\textheight}{8.4in}

\newcommand{\seqnum}[1]{\href{http://oeis.org/#1}{\underline{#1}}}

\begin{document}


\theoremstyle{plain}
\newtheorem{theorem}{Theorem}
\newtheorem{corollary}[theorem]{Corollary}
\newtheorem{lemma}[theorem]{Lemma}
\newtheorem{proposition}[theorem]{Proposition}

\theoremstyle{definition}
\newtheorem{definition}[theorem]{Definition}
\newtheorem{example}[theorem]{Example}
\newtheorem{conjecture}[theorem]{Conjecture}
\newtheorem{problem}[theorem]{Problem}

\theoremstyle{remark}
\newtheorem{remark}[theorem]{Remark}

\newcommand\T{\mathcal{T}}
\newcommand\CC{\mathbb{C}}

\begin{center}
\vskip 1cm{\LARGE\bf The Nonassociativity of the Double Minus  \\
\vskip .1in Operation}
\vskip 1cm
\large
Jia Huang, Madison Mickey, and Jianbai Xu\footnote{
The second and third authors were supported by the Undergraduate Research Fellows program of the University of Nebraska at Kearney.} \\
Department of Mathematics and Statistics \\
University of Nebraska at Kearney \\
Kearney, NE 68849  \\
USA \\
\href{mailto:huangj2@unk.edu}{\tt huangj2@unk.edu} \\
\href{mailto:mickeym2@lopers.unk.edu}{\tt mickeym2@lopers.unk.edu} \\
\href{mailto:xuj2@lopers.unk.edu}{\tt xuj2@lopers.unk.edu} \\
\end{center}

\vskip .2 in

\begin{abstract}
The sequence \seqnum{A000975} in OEIS can be defined by $A_1=1$, $A_{n+1}=2A_n$ if $n$ is odd, and $A_{n+1}=2A_n+1$ if $n$ is even.
This sequence satisfies other recurrence relations, admits some closed formulas, and is known to enumerate several interesting families of objects.
We provide a new interpretation of this sequence using a binary operation defined by $a\ominus b := -a -b$.
We show that the number of distinct results obtained by inserting parentheses in the expression $x_0\ominus x_1\ominus \cdots\ominus x_n$ equals $A_n$, by investigating the leaf depth in binary trees.
Our result can be viewed as a quantitative measurement for the nonassociativity of the binary operation $\ominus$.
\end{abstract}

\section{Introduction}\label{sec:intro}

The sequence \seqnum{A000975} in \emph{The On-Line Encyclopedia of Integer Sequences} (OEIS)~\cite{OEIS} starts with $1,2,5,10,21,42,85,\ldots$.
With its $n$th term denoted by $A_n$, this sequence has the following (equivalent) characterizations.
\begin{itemize}
\item
$A_1=1$, $A_{n+1}=2A_n$ if $n$ is odd, and $A_{n+1}=2A_n+1$ if $n$ is even.
\item
$A_1=1$ and $A_n=2^n-1-A_{n-1}$ for $n\ge2$.
\item
$A_1=1$, $A_2=2$, and $A_n=A_{n-2}+2^{n-1}$ for $n\ge3$.
\item
$A_n$ is the positive integer with an alternating binary representation of length $n$. 
\end{itemize}

There are also some known closed formulas for this sequence:
\begin{equation}\label{eq:An}
A_n= \left\lfloor \frac{2^{n+1}}3 \right\rfloor = \frac{2^{n+2}-3-(-1)^n}{6} 
=\begin{cases} 
\displaystyle \frac{2^{n+1}-1}3, & \text{if $n$ is odd}; \\[15pt]
\displaystyle \frac{2^{n+1}-2}3, & \text{if $n$ is even}. 
\end{cases} 
\end{equation}

By Hinz~\cite{Hinz}, this sequence dates back to 1769, when 
Lichtenberg studied it in connection with the Chinese Rings puzzle (baguenaudier).
This sequence has many interesting interpretations, some of which are listed below.
\begin{itemize}
\item
$A_n$ is the number of moves required to solve the $n$-ring Chinese Rings puzzle~\cite{Hinz}.
\item
$A_n$ is the distance between $0^n$ and $1^n$ in an $n$-bit binary Gray code~\cite[Ch. 1]{HanoiTower}.
\item
$A_n$ is the number of ways to partition $n+2$ people sitting at a circular table into three nonempty groups with no two members of the same group seated next to each other~\cite{CircSet}.
\end{itemize}

The sequence \seqnum{A000975} also has connections to some other sequences in OEIS~\cite{OEIS}, such as \seqnum{A000217}, \seqnum{A048702}, \seqnum{A155051}, \seqnum{A265158}, and so on.
See Stockmeyer~\cite{A000975} for more details.

In this paper we present a new interpretation for the sequence \seqnum{A000975} using a binary operation which is not associative.
This naturally connects to the ubiquitous Catalan numbers and objects counted by Catalan numbers, such as binary trees.

In fact, let $*$ be a binary operation defined on a set $S$ and let $x_0, x_1,\ldots, x_n$ be indeterminates taking values from $S$.
It is well known that the number of ways to insert parentheses in the expression $x_0 * x_1 * \cdots * x_n$ is the \emph{Catalan number} $C_n:=\frac{1}{n+1} \binom{2n}{n}$, which enumerates many families of interesting objects~\cite{EC2}.

A natural question is how many distinct results can be obtained by inserting parentheses in $x_0 * x_1 * \cdots * x_n$ for a given binary operation $*$.
To be more precise, observe that each parenthesization of $x_0 * x_1 * \cdots * x_n$ gives a function from $S^{n+1}\to S$, and if two parenthesizations give the same function then they are said to be \emph{$*$-equivalent}.
Hein and the first author~\cite{CatMod} defined $C_{*,n}$ to be the number of $*$-equivalence classes of parenthesizations of $x_0 * x_1 * \cdots * x_n$.
This provides a quantitative measurement for the nonassociativity of $*$, as we have inequalities $1\le C_{*,n}\le C_n$ where the first equality holds if and only if $*$ is \emph{associative}, i.e., $(a*b)*c=a*(b*c)$ for all $a,b,c\in S$.

Lord~\cite{Lord} introduced the \emph{depth of nonassociativity} of a binary operation and examined this depth for some elementary binary operations.
The nonassociativity measurement $C_{*,n}$ is much finer than the depth of nonassociativity of $*$ since the latter can be written as $\inf \{ n+1: C_{*,n}<C_n\}$.
We have not found any other result on measuring nonassociativity of a binary operation in the literature.

For a family of binary operations defined by $a*b:= \omega a + b$ for $a$ and $b$ in the field $\CC$ of complex numbers, where $\omega := e^{2\pi i/k}$ is a primitive $k$th root of unity, Hein and the first author~\cite{CatMod} determined the number $C_{*,n}$ and studied its connections to various objects counted by the Catalan numbers.

Now we define a binary operation by $a \ominus b := -a-b$ for all $a,b$ in $\CC$ (or any other infinite field).
We call this operation the \emph{double minus operation}.
It is not associative and we show that its nonassociativity measurement $C_{\ominus,n}$ equals $A_n$, the $n$th number in the sequence \seqnum{A000975}.

To prove this result, we first study the correspondence between parenthesizations and binary trees, and develop some lemmas on the depths of leaves in binary trees in Section~\ref{sec:prelim}.
Here the \emph{depth} of a leaf is the number of steps in the unique path from the root to this leaf.

We are interested in the leaf depths in binary trees because the results from parenthesizations of $x_0 \ominus x_1\ominus \cdots \ominus x_n$ can be written in the form
\[ \pm x_0 \pm x_1 \pm\cdots \pm x_n \]
where the sign of $x_i$ is determined by the parity of the depth of the $i$th leaf in the corresponding binary tree.
Combining this observation with the lemmas given in Section~\ref{sec:prelim} we prove our main results in Section~\ref{sec:main}, which are summarized below.

We first establish a closed formula for the number $C_{\ominus,n,r}$ of distinct results obtained from parenthesizations of $x_0\ominus x_1\ominus \cdots \ominus x_n$ with exactly $r$ plus signs.
The result gives a truncated/modified version of the well-known Pascal triangle; see Table~\ref{tab:Cnr}.

The number $C_{\ominus,n,r}$ refines the number $C_{\ominus,n}$ as $C_{\ominus,n} = \sum_{0\le r\le n+1} C_{\ominus,n,r}$.
It also implies that $C_{\ominus,n}$ is determined by a ``skipping sum'' of binomial coefficients and we explicitly calculate it using cubic roots of unity.
This proves the equality $C_{\ominus,n}=A_n$.
Our proof leads to a characterization of all distinct results from parenthesizations of $x_0 \ominus x_1\ominus \cdots \ominus x_n$ using binary sequences, and hence gives another family of objects enumerated by the sequence \seqnum{A000975}.
We also present a simpler proof for the equality $C_{\ominus,n}=A_n$, which was pointed out to us by an anonymous referee, by showing the recurrence relation $C_{\ominus,n} = 2^n-1-C_{\ominus,n-1}$ for $n\ge2$.

Finally, we provide some questions related to our results and proofs for future study in Section~\ref{sec:questions}.

\section{Leaf depths in binary trees}\label{sec:prelim}

In this section we first recall some basic definitions and properties for binary trees and their connections to parenthesizations, and then develop some lemmas on the depths of leaves in binary trees. 

A \emph{(full) binary tree} is a rooted tree in which each node has either no child or two children ordered from left to right.
A node without any child is called a \emph{leaf}. 

Let $t$ be a binary tree. 
A \emph{subtree} of $t$ is a binary tree whose edges and nodes are contained in $t$.
The \emph{left/right subtree} of a node $v$ in $t$ is the maximal subtree of $t$ rooted at the left/right child of $v$.

Let $\T_n$ be the set of all binary trees with $n+1$ leaves.
For each $t\in\T_n$, we label the leaves $0,1,\ldots,n$ from left to right, or more precisely, in the \emph{preorder} of $t$.
The \emph{depth} of leaf $i$ in $t$, denoted by $d_i(t)$, is the length of the unique path from the root of $t$ down to leaf $i$.
The \emph{depth sequence} of $t$ is defined as $d(t) = ( d_0(t),d_1(t),\ldots,d_n(t) )$.

Next, we introduce a binary operation $\wedge$ for binary trees.
If $s\in\T_m$ and $t\in\T_n$ then define $s\wedge t$ to be the binary tree in $\T_{m+n+1}$ whose root has left subtree $s$ and right subtree $t$.

\begin{proposition}\label{prop:wedge}
If $s\in\T_m$ and $t\in\T_n$ then the depth sequence of $s\wedge t$ is 
\[ d(s\wedge t) = ( d_0(s)+1, \ldots,d_m(s)+1, d_0(t)+1,\ldots,d_n(t)+1).\]
\end{proposition}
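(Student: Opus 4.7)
The plan is to unpack the definition of $s \wedge t$ and match the leaves of $s \wedge t$ to those of $s$ and $t$ in the correct order, then track how the root-to-leaf paths change.

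First I would verify the labeling of the leaves. By construction, $s \wedge t$ has a root $r$ whose left subtree is (a copy of) $s$ and whose right subtree is (a copy of) $t$. Since the preorder traversal of any binary tree visits the root, then the full left subtree, then the full right subtree, the $m+1$ leaves of $s \wedge t$ that come first in preorder are precisely the leaves of $s$ in their original preorder, and the next $n+1$ leaves are the leaves of $t$ in their original preorder. Thus, under the convention that leaves are labeled $0,1,\ldots,m+n+1$ from left to right in $s \wedge t$, leaf $i$ for $0 \le i \le m$ corresponds to leaf $i$ of $s$, and leaf $m+1+j$ for $0 \le j \le n$ corresponds to leaf $j$ of $t$.

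Next I would compare path lengths. The unique path in $s \wedge t$ from the root $r$ down to a leaf coming from $s$ starts with the edge from $r$ to the root of $s$, followed by the unique path inside $s$ from its root down to that leaf. Hence $d_i(s \wedge t) = d_i(s) + 1$ for $0 \le i \le m$. The analogous argument for the right subtree yields $d_{m+1+j}(s \wedge t) = d_j(t) + 1$ for $0 \le j \le n$. Assembling these coordinates gives exactly the claimed depth sequence.

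No step is a genuine obstacle: the proposition is a bookkeeping identity that follows from the definitions of $\wedge$, of the preorder labeling of leaves, and of leaf depth. The only point worth stating carefully is why the preorder labeling in $s \wedge t$ agrees with the preorder labelings inherited from $s$ and $t$, which is a one-line appeal to how preorder traversal handles a root with two subtrees.
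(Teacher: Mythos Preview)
Your proof is correct and follows exactly the approach the paper intends: the paper's own proof is the single sentence ``The result follows immediately from the construction of $s\wedge t$,'' and your write-up simply unpacks that construction carefully (preorder labeling of leaves, then the extra edge from the new root adding $1$ to each depth). There is nothing to add or change.
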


\begin{proof}
The result follows immediately from the construction of $s\wedge t$.
\end{proof}

\begin{example}
Figure~\ref{fig:wedge} illustrates binary trees $s\in\T_3$, $t\in\T_2$, and $s\wedge t\in\T_6$.
One sees that $d(s) = (2,3,3,1)$, $d(t) = (2,2,1)$, and $d(s\wedge t) = (3,4,4,2,3,3,2)$.
\begin{figure}[H]
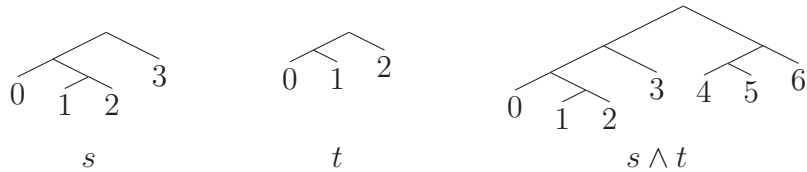

\[ \begin{array}{ccccc}
\Tree [.  [. 0 [. 1 2 ] ] 3 ] & \qquad &  \Tree [. [. 0 1 ] 2 ] & \qquad & \raisebox{10pt}{\Tree [. [.  [. 0 [. 1 2 ] ] 3 ] [. [. 4 5 ] 6 ] ]} \\
s & \qquad & t & \qquad & s\wedge t
\end{array} \]
\caption{A binary operation on binary trees}\label{fig:wedge}
\end{figure}
\end{example}

Let $*$ be a binary operation on a set $S$ and let $x_0,x_1,\ldots,x_n$ be $S$-valued indeterminates.
There are different ways to insert parentheses in the expression $x_0*x_1*\cdots*x_n$.
These parenthesizations of $x_0*x_1*\cdots*x_n$ are enumerated by the {Catalan number} $C_n$.
Moreover, the parenthesizations of $x_0*x_1*\cdots*x_n$ are naturally in bijection with binary trees in $\T_n$, since these binary trees are precisely the results from parenthesizing $t_0\wedge t_1 \wedge \cdots \wedge t_n$, where $t_0,t_1,\ldots,t_n$ all consist of a single node.

\begin{example}\label{example:correspondence}
We list all binary trees in $\T_3$ and their corresponding parenthesizations in Figure~\ref{fig:T2P}.
These binary trees have depth sequences $(3,3,2,1)$, $(2,2,2,2)$, $(2,3,3,1)$, $(1,3,3,2)$, and $(1,2,3,3)$, respectively.
\begin{figure}[H]
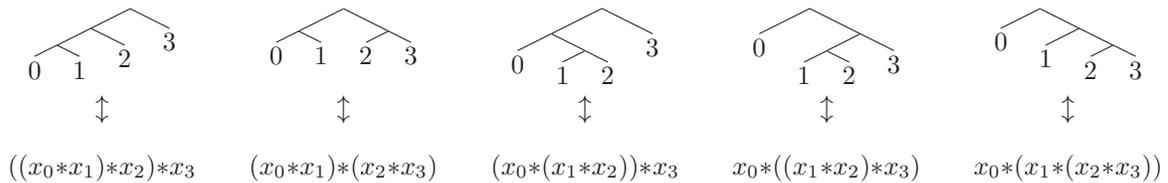

{\footnotesize \[ \begin{array}{ccccc}
\Tree [.  [. [. 0 1 ] 2 ] 3 ] &
\Tree [.  [. 0 1 ] [. 2 3 ] ] &
\Tree [.  [. 0 [. 1 2 ] ] 3 ] &
\Tree [. 0 [. [. 1 2 ] 3 ] ] &
\Tree [. 0  [. 1 [. 2 3 ] ] ] \\[10pt]
\updownarrow & \updownarrow & \updownarrow & \updownarrow & \updownarrow \\[10pt]
\rule{5pt}{0pt}((x_0 {*} x_1) {*} x_2) {*} x_3\rule{5pt}{0pt} &
\rule{5pt}{0pt}(x_0 {*} x_1) {*} (x_2 {*} x_3)\rule{5pt}{0pt} &
\rule{5pt}{0pt}(x_0 {*} (x_1{*} x_2)) {*} x_3 \rule{5pt}{0pt} &
\rule{5pt}{0pt}x_0 {*} ((x_1 {*} x_2) {*} x_3)\rule{5pt}{0pt} &
\rule{5pt}{0pt}x_0 {*} (x_1 {*} (x_2 {*} x_3))\rule{5pt}{0pt}
\end{array} \] }
\caption{Correspondence between binary trees and parenthesizations}
\label{fig:T2P}
\end{figure}
\end{example}

In the remainder of this section we give some lemmas on leaf depths in binary trees.

\begin{lemma}\label{lem:DNE}
For all $n\ge0$ and $0\le r\le n+1$, there exists a binary tree $t\in\T_n$ with exactly $r$ leaves of even depth only if $n+r\equiv1\pmod 3$.
\end{lemma}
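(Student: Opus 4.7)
The plan is to induct on $n$, using the decomposition of any nonleaf binary tree $t\in\T_n$ as $t = s\wedge t'$ with $s\in\T_m$ and $t'\in\T_{n-m-1}$, together with Proposition~\ref{prop:wedge}. The key observation is that when we pass from the pair $(s,t')$ to $s\wedge t'$, every leaf depth increases by one, so the parity of each leaf depth flips: leaves of even depth in $s$ (resp.\ $t'$) become leaves of odd depth in $s\wedge t'$, and vice versa.

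To streamline the arithmetic I would introduce the signed counting function $f(t) := E(t) - O(t)$, where $E(t)$ and $O(t)$ denote the numbers of even-depth and odd-depth leaves of $t$. The depth-flipping just described yields
\[ f(s\wedge t') \;=\; -f(s) - f(t'), \]
which is a pleasing preview of the double minus operation itself. The one-leaf tree (the unique element of $\T_0$) satisfies $f = 1$, and since $-1 - 1 \equiv 1 \pmod 3$, an easy induction on $n$ gives $f(t) \equiv 1 \pmod 3$ for every binary tree $t$.

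To conclude, note that for $t\in\T_n$ with exactly $r$ leaves of even depth we have $E(t)=r$ and $E(t)+O(t)=n+1$, so $f(t) = 2r-n-1$. The congruence $2r - n - 1 \equiv 1 \pmod 3$ then rearranges to $n + r \equiv 1 \pmod 3$, as required. I do not anticipate any real obstacle; the whole argument rests on the single identity $f(s\wedge t') = -f(s) - f(t')$ together with the observation that $1$ is a fixed point of $x \mapsto -x-x$ modulo $3$, and the rest is routine bookkeeping.
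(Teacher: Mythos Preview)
Your proof is correct and follows essentially the same strategy as the paper's: induct on $n$, decompose $t$ as $s\wedge t'$, use Proposition~\ref{prop:wedge} to flip all leaf-depth parities, and apply the induction hypothesis to the two subtrees. Your repackaging via the invariant $f(t)=E(t)-O(t)$ with the recurrence $f(s\wedge t')=-f(s)-f(t')$ is slightly slicker than the paper's direct bookkeeping with $n_i$ and $r_i$, and the explicit appearance of the double minus operation in that recurrence is a nice conceptual touch absent from the paper's version.
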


\begin{proof}
We induct on $n$.
The case $n=0$ is trivial.
Assume $n\ge1$, $0\le r\le n+1$, and there exists a binary tree $t\in\T_n$ with exactly $r$ leaves of even depth. 
Let $t_1$ and $t_2$ be the left and right subtrees of the root of $t$.
For $i=1,2$, let $n_i+1$ be the number of leaves in $t_i$ and let $r_i$ be the number of leaves of even depth in $t_i$.
Since $t=t_1\wedge t_2$, we have 
\[ n_1+1+n_2+1=n+1 \qquad\text{and}\qquad r_1 + r_2 = n+1-r\]
where the second equation follows from Proposition~\ref{prop:wedge}.
We also have 
\[ n_1+r_1 \equiv 1\pmod3 \qquad\text{and}\qquad n_2+r_2 \equiv 1\pmod3 \]
by the induction hypothesis.
Therefore
\[ n+r = 2(n_1+n_2)+3-(r_1+r_2) \equiv 2(n_1+n_2+r_1+r_2) \equiv 1 \pmod 3. \]
This completes the proof.
\end{proof}

\begin{lemma}\label{lem:alternating}
For all $n\ge1$ and $t\in\T_n$, we have $d_j(t) = d_{j+1}(t)$ for some $j\in\{0,1,\ldots,n-1\}$.
\end{lemma}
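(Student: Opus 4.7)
The plan is to give a direct, non-inductive argument based on a deepest leaf of $t$. Since $n \geq 1$, the tree $t \in \T_n$ has at least two leaves and its root is an internal node. First I would pick a leaf $v$ of $t$ whose depth $D$ is as large as possible. Because $t$ is a full binary tree, $v$ has a sibling $w$. If $w$ were an internal node, then $w$ would have a descendant leaf of depth strictly greater than $D$, contradicting the maximality of $D$. Hence $w$ is also a leaf, and its depth equals $D$ as well.

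Second, I would argue that $v$ and $w$ carry consecutive labels under the left-to-right (preorder) labeling of the leaves. Without loss of generality $v$ is the left child of their common parent and $w$ is the right child. In a preorder traversal, $v$ is visited immediately before $w$ since $v$ has no descendants; consequently no other leaf is visited strictly between them, so their leaf labels differ by exactly $1$. Writing $v$ as leaf $j$ and $w$ as leaf $j+1$, we obtain $d_j(t) = d_{j+1}(t) = D$ for some $j \in \{0, 1, \ldots, n-1\}$, which is the desired conclusion.

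The main delicate point I anticipate is the preorder-to-leaf-label bookkeeping in the second step, namely the claim that two sibling leaves automatically receive consecutive labels; this boils down to the fact that a preorder traversal visits a node's entire left subtree before entering its right subtree. For completeness I would also mention a short alternative proof by induction on $n$ via Proposition~\ref{prop:wedge}: the case $n = 1$ is immediate since both leaves have depth $1$, and for $n \geq 2$ one writes $t = s \wedge u$ and applies the induction hypothesis to whichever of $s$, $u$ has at least two leaves, then lifts the resulting pair of adjacent leaves of equal depth to $t$ using the uniform depth shift described by Proposition~\ref{prop:wedge}.
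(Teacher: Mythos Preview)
Your proposal is correct and follows essentially the same approach as the paper's proof: pick a deepest leaf, observe that its sibling must also be a leaf, and conclude the two carry consecutive labels. The paper's only refinement is to choose the \emph{leftmost} leaf of maximum depth from the outset, which forces its sibling to lie to the right and so removes the need for your ``without loss of generality'' relabeling step.
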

\begin{proof}
Consider the leftmost leaf $j$ with the largest depth among all leaves of $t$. 
The sibling of $j$, i.e., the other child of the parent of $j$, must be a leaf (otherwise the depth of $j$ is not the largest) and to the right of $j$ (otherwise $j$ is not the leftmost leaf with the largest depth).
It follows that the sibling of $j$ is the leaf $j+1$ and $d_j(t) = d_{j+1}(t)$.
\end{proof}

Let $n\ge0$. 
We say a binary sequence $(d_0,d_1,\ldots,d_n)\in\{0,1\}^{n+1}$ is \emph{non-alternating} if $n\ge1$ implies $d_j = d_{j+1}$ for some $j\in\{0,1,\ldots,n-1\}$, and say it is \emph{admissible} if it is non-alternating and satisfies
\[ n + | \{ i: 0\le i\le n,\ d_i=0 \}| \equiv 1 \pmod 3. \]
Write $(a_0,a_1,\ldots,a_n)\equiv (b_0,b_1,\ldots,b_n)\pmod m$ if $a_i\equiv b_i\pmod m$ for all $i\in\{0,1,\ldots,n\}$.

\begin{lemma}\label{lem:admissible}
Let $n\ge0$ and suppose $(d_0,d_1, \ldots,d_n) \in \{0,1\}^{n+1}$ is admissible.
Then there exists a binary tree $t\in\T_n$ such that $d(t) \equiv (d_0,d_1,\ldots,d_n) \pmod 2$.
\end{lemma}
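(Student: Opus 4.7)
The plan is induction on $n$. The base case $n=0$ is immediate: admissibility forces $d_0=0$, and the single-node tree in $\T_0$ has depth $0$.

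For the inductive step, given admissible $(d_0,\ldots,d_n)$ with $n\ge 1$, I plan to pick an index $j$ from the non-empty set $J:=\{i:d_i=d_{i+1}\}$ and \emph{contract} the sequence at $(j,j+1)$ into a single entry of opposite value, producing
\[ (d_0,\ldots,d_{j-1},\,1-d_j,\,d_{j+2},\ldots,d_n) \]
of length $n$. Contracting decreases the length by $1$ and changes the zero count by $-2$ (if $d_j=0$) or $+1$ (if $d_j=1$), so the mod-$3$ admissibility condition $n+r\equiv 1\pmod 3$ is preserved. Assuming the contracted sequence is also non-alternating, and hence admissible, the inductive hypothesis yields some $t'\in\T_{n-1}$ realizing its parities. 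I then recover $t\in\T_n$ by replacing leaf $j$ of $t'$ with the wedge of two single-node trees: the pair of new sibling leaves sits one level deeper than leaf $j$ did in $t'$, giving both parity congruent to $d_j=d_{j+1}$, while all other leaf depth parities are inherited from $t'$ and match the remaining entries of the original sequence.

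The main obstacle is choosing $j\in J$ so the contracted sequence stays non-alternating. I plan a brief case analysis. (i) If $\min(J)\ge 1$, take $j=\min(J)$; then $d_{j-1}\ne d_j$ by minimality, so the new entries at positions $j-1$ and $j$ both equal $1-d_j$, producing a new repeat. (ii) If $\min(J)=0$ but $J\ne\{0,1\}$, take $j=0$; either $1\notin J$, in which case $d_2\ne d_1=d_0$ forces $d_2=1-d_0$ so the new entries at positions $0$ and $1$ match, or $J$ contains some $k\ge 2$, in which case that repeat survives (shifted) in the new sequence. (iii) If $J=\{0,1\}$, then $d_0=d_1=d_2$ and $d_3=1-d_0$ (using $2\notin J$), so contracting at $j=1$ creates a new repeat between the entries $1-d_0$ at positions $1$ and $2$. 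In every case the contracted sequence is admissible, which closes the induction.
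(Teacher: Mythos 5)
Your proposal is correct and takes essentially the same route as the paper's proof: contract an adjacent equal pair into a single entry of opposite value, check that the mod-$3$ condition and (by a suitable choice of the pair) non-alternation are preserved, invoke the inductive hypothesis, and recover $t\in\T_n$ by expanding the corresponding leaf of $t'$ into two children, the only difference being bookkeeping in the case analysis (the paper works with a maximal run of equal entries and treats the constant sequence separately). One degenerate point to patch: when $n=1$ the sequence is necessarily $(1,1)$ and your case (ii) refers to a nonexistent $d_2$, but the contracted length-one sequence is vacuously non-alternating, so the argument still closes.
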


\begin{proof} 
We induct on $n$. 
The result holds for $n=0$ since the only admissible sequence in $\{0,1\}^1$ is $(0)$, which is the depth sequence of the unique binary tree in $\T_0$.

For $n\ge1$, suppose $(d_0,d_1,\ldots,d_n) \in \{0,1\}^{n+1}$ is admissible, i.e., it is non-alternating and satisfies $n+r \equiv 1 \pmod 3$, where $r := | \{ i: 0\le i\le n,\ d_i=0 \}|$.
This sequence must contain a run of all zeros or all ones of length at least two.
Replacing an adjacent pair of zeros (or ones, resp.) in this run with a one (or zero, resp.) gives a new sequence $(d'_0,\ldots,d'_{n-1})\in\{0,1\}^{n}$, which satisfies
\[ n-1 + | \{ i: 0\le i\le n,\ d'_i=0 \}| \equiv n-1+r+1 \equiv 1\pmod 3.\]
We show that it is always possible to make the new sequence $(d'_0,\ldots,d'_{n-1})$ non-alternating.

In fact, we may assume $(d_0,\ldots,d_n)$ contains a run of zeros of length at least two, without loss of generality.
If this run of zeros is preceded (or followed, resp.) by a one, then we can replace the first (or last, resp.) two zeros in this run with a one and get a non-alternating sequence $(d'_0,\ldots,d'_{n-1})$.
If it is not preceded nor followed by a one, then we must have $d_0=d_1=\cdots=d_n=0$.
The condition $n+r\equiv1\pmod 3$ implies $n\equiv0\pmod3$ and thus $n\ge3$.
Then replacing the first two zeros in $(d_0,\ldots,d_n)$ with a one gives a non-alternating sequence $(d'_0,\ldots,d'_{n-1})$.
Some examples are given below.
\[ 1\underline{00}1100 \to 1\underline{1}1100 \qquad
1010\underline{00}1 \to 1010\underline{1}1 \qquad
 0000 \to \underline{1}00 \]
 
Now we know that $(d'_0,\ldots,d'_{n-1})$ is admissible. 
By the induction hypothesis, there exists a binary tree $t'\in\T_{n-1}$ such that $d(t')\equiv (d'_0,\ldots,d'_{n-1}) \pmod 2$.
The construction of the sequence $(d'_0,\ldots,d'_{n-1})$ implies that $d'_i$ is obtained by switching a pair of adjacent zeros (or ones, resp.) in $(d_0,\ldots,d_n)$ to a one (or zero, resp.) for some $i\in\{0,1,\ldots,n-1\}$.
Then appending two children to the $i$th leaf of $t'$ gives a binary tree $t\in\T_n$ satisfying $d(t)\equiv (d_0,\ldots,d_n) \pmod 2$. 
\end{proof}

\section{Main results}\label{sec:main}

In this section we prove that the number $C_{\ominus,n}$ of equivalence classes of parenthesizations of $x_0\ominus x_1\ominus\cdots\ominus x_n$ equals $A_n$, the $n$th term of the sequence \seqnum{A000975}.

Since $C_{\ominus,n}$ also enumerates distinct results from parenthesizing  $x_0\ominus x_1\ominus\cdots\ominus x_n$, it is refined by the number $C_{\ominus,n,r}$ of distinct results from parenthesizations of $x_0\ominus x_1\ominus \cdots \ominus x_n$ with exactly $r$ plus signs.
We have 
\begin{equation}\label{eq:refine}
C_{\ominus,n} = \sum_{0\le r\le n+1} C_{\ominus,n,r}.
\end{equation}
We can establish a closed formula for the number $C_{\ominus,n,r}$, thanks to the following observation.

\begin{proposition}\label{prop:sign}
A parenthesization of $x_0 \ominus x_1\ominus \cdots \ominus x_n$ equals 
\[ (-1)^{d_0(t)} x_0 + (-1)^{d_1(t)} x_1 + \cdots + (-1)^{d_n(t)} x_n \]
where $t\in\T_n$ is the binary tree corresponding to this parenthesization.
\end{proposition}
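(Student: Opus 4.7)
The plan is to prove Proposition~\ref{prop:sign} by induction on $n$, using the recursive structure of binary trees under the operation $\wedge$ together with Proposition~\ref{prop:wedge}.

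The base case $n=0$ is immediate: the unique tree in $\T_0$ is a single leaf of depth $0$, and the only ``parenthesization'' of the expression $x_0$ is $x_0 = (-1)^0 x_0$.

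For the inductive step, I would use the bijection between parenthesizations of $x_0 \ominus x_1 \ominus \cdots \ominus x_n$ and binary trees in $\T_n$. Any such parenthesization has a topmost $\ominus$ splitting the expression at some position $m$, so it has the form $P \ominus Q$ where $P$ is a parenthesization of $x_0 \ominus \cdots \ominus x_m$ and $Q$ is a parenthesization of $x_{m+1} \ominus \cdots \ominus x_n$. Correspondingly, the associated binary tree is $s \wedge t$ where $s \in \T_m$ corresponds to $P$ and $t \in \T_{n-m-1}$ corresponds to $Q$ (after reindexing the indeterminates in $Q$ as $x_0', \ldots, x_{n-m-1}'$ with $x_j' = x_{m+1+j}$).

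By the induction hypothesis applied to $s$ and $t$, we have
\[ P = \sum_{i=0}^{m} (-1)^{d_i(s)} x_i, \qquad Q = \sum_{j=0}^{n-m-1} (-1)^{d_j(t)} x_{m+1+j}. \]
Since $\ominus$ is defined by $a \ominus b = -a - b$, it follows that
\[ P \ominus Q = \sum_{i=0}^{m} (-1)^{d_i(s)+1} x_i + \sum_{j=0}^{n-m-1} (-1)^{d_j(t)+1} x_{m+1+j}. \]
Finally, by Proposition~\ref{prop:wedge} the depth sequence of $s \wedge t$ is exactly $(d_0(s)+1,\ldots,d_m(s)+1, d_0(t)+1,\ldots,d_{n-m-1}(t)+1)$, so each exponent of $-1$ above matches the depth of the corresponding leaf in $s \wedge t$, giving the desired expression.

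There is no genuine obstacle here; the only delicate point is bookkeeping the reindexing of the indeterminates in the right subtree so that Proposition~\ref{prop:wedge} applies cleanly. Once that is handled, the proof reduces to observing that the sign flip introduced by $\ominus$ corresponds precisely to the $+1$ shift in leaf depth recorded by $\wedge$.
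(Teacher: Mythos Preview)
Your proof is correct and is essentially the same idea as the paper's, just made fully explicit: the paper's proof is a one-sentence observation that expanding via $a\ominus b=-a-b$ applies exactly $d_i(t)$ negations to $x_i$, while you carry this out as a formal induction on $n$ using the decomposition $s\wedge t$ and Proposition~\ref{prop:wedge}. The only cosmetic issue is that you reuse the letter $t$ for the right subtree when the statement already uses $t$ for the whole tree; renaming the subtrees (say $t_1,t_2$) would avoid the clash.
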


\begin{proof}
If a parenthesization of $x_0 \ominus x_1\ominus \cdots \ominus x_n$ corresponds to a binary tree $t\in\T_n$, then expanding this parenthesization using the definition $a\ominus b:=-a-b$, one sees that $x_i$ receives $d_i(t)$ many negative signs for all $i$.
The result follows.
\end{proof}

By Proposition~\ref{prop:sign}, $C_{\ominus,n,r}$ enumerates binary trees in $\T_n$ with exactly $r$ leaves of even depth.
Using this observation, we compute the number $C_{\ominus,n,r}$ for $0\le n\le 12$ and $0\le r\le n+1$ and record the result in Table~\ref{tab:Cnr}, whose rows and columns are indexed by $n$ and $r$, respectively.

\begin{table}[H]
\centering
\begin{tabular}{c|cccccccccccccc}
$r$ & $0$ & $1$ & $2$ & $3$ & $4$ & $5$ & $6$ & $7$ & $8$ & $9$ & $10$ & $11$ & $12$ & $13$ \\
\hline
$C_{\ominus,0,r}$ & & 1 & \\
\hline
$C_{\ominus,1,r}$ & 1 & \\
\hline
$C_{\ominus,2,r}$ & & & ${\color{red}2}$ \\
\hline
$C_{\ominus,3,r}$ & & 4 & & & 1 \\
\hline
$C_{\ominus,4,r}$ & 1 & & & {\color{red}9} \\
\hline
$C_{\ominus,5,r}$ & & & 15 & & & 6 \\
\hline
$C_{\ominus,6,r}$ & & 7 & & & {\color{red}34} & & & 1 \\
\hline
$C_{\ominus,7,r}$ & 1 & & & 56 & & & 28 \\
\hline
$C_{\ominus,8,r}$ & & & 36 & & & {\color{red}125} & & & 9 \\
\hline
$C_{\ominus,9,r}$ & & 10 & & & 210 & & & 120 & & & 1 \\
\hline
$C_{\ominus,10,r}$ & 1 & & & 165 & & & {\color{red}461} & & & 55 \\
\hline
$C_{\ominus,11,r}$ & & & 66 & & & 792 & & & 495 & & & 12 \\
\hline
$C_{\ominus,12,r}$ & & 13 & & & 715 & & & {\color{red}1715} & & & 286 & & & 1 \\
\end{tabular}
\caption{$C_{\ominus,n,r}$ for $0\le n\le 12$ and $0\le r\le n+1$}\label{tab:Cnr}
\end{table}

In this table, an empty spot means zero, a black entry equals $\binom{n+1}{r}$, and a red entry equals $\binom{n+1}{r}-1$.
Table~\ref{tab:Cnr} can be viewed as a truncated/modified version of the well-known Pascal triangle and suggests a closed formula for $C_{\ominus,n,r}$.

\begin{theorem}\label{thm:Cnr}
For $n\ge1$ and $0\le r\le n+1$ we have
\[ C_{\ominus,n,r} =
\begin{cases}
\displaystyle \binom{n+1}{r} , & \text{if } n+r \equiv 1 \pmod 3 \text{ and } n \ne 2r-2;  \\[12pt]
\displaystyle \binom{n+1}{r} - 1, & \text{if } n+r \equiv 1 \pmod 3 \text{ and } n = 2r-2; \\[12pt]
0, & \text{if } n+r \not\equiv 1 \pmod 3.
\end{cases} \]
\end{theorem}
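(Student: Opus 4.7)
My plan is to reduce $C_{\ominus,n,r}$ to the count of admissible binary sequences of length $n+1$ with exactly $r$ zeros, and then to carry out a short parity/mod-$3$ analysis. By Proposition~\ref{prop:sign}, the parenthesization corresponding to $t\in\T_n$ evaluates to $\sum_{i=0}^n (-1)^{d_i(t)} x_i$, so $x_i$ receives a plus sign exactly when $d_i(t)$ is even. Since the $x_i$ are independent indeterminates over an infinite field, two parenthesizations give the same function if and only if they give the same sign pattern. Hence $C_{\ominus,n,r}$ equals the number of sequences $(\epsilon_0,\ldots,\epsilon_n)\in\{0,1\}^{n+1}$ with exactly $r$ zeros that arise as $d(t)\bmod 2$ for some $t\in\T_n$. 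Lemmas~\ref{lem:DNE} and~\ref{lem:alternating} show that any realizable sequence must be admissible (non-alternating and $n+r\equiv 1\pmod 3$), and Lemma~\ref{lem:admissible} gives the converse, so $C_{\ominus,n,r}$ equals the number of admissible sequences in $\{0,1\}^{n+1}$ with exactly $r$ zeros.

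If $n+r\not\equiv 1\pmod 3$, no admissible sequence exists and $C_{\ominus,n,r}=0$. Otherwise, there are $\binom{n+1}{r}$ sequences of length $n+1$ with exactly $r$ zeros, and one only needs to subtract the alternating ones. The only alternating binary sequences of length $n+1$ are $\sigma_0=010\cdots$ and $\sigma_1=101\cdots$, with zero-counts $\lceil(n+1)/2\rceil$ and $\lfloor(n+1)/2\rfloor$, respectively. A short check of the candidate pairings of $(n,r)$ under the hypothesis $n+r\equiv 1\pmod 3$ rules out $n=2r$ (which forces $n+r\equiv 0$) and $n$ odd with $r=(n+1)/2$ (which forces $n+r=(3n+1)/2\equiv 2$), and leaves only $n=2r-2$ (for which $n+r=3r-2\equiv 1$); in that last case $\sigma_0$ is the unique alternating sequence with $r$ zeros. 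This yields $C_{\ominus,n,r}=\binom{n+1}{r}-1$ when $n=2r-2$ and $C_{\ominus,n,r}=\binom{n+1}{r}$ otherwise, matching the theorem.

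The only genuine obstacle is the modular bookkeeping in the last step, and even that is routine; the conceptual content—the equivalence between realizable depth-parity sequences and admissible sequences—is already packaged in the three lemmas of Section~\ref{sec:prelim}, so the proof is essentially assembly.
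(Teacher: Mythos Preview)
Your argument is correct and follows essentially the same route as the paper's own proof: both reduce $C_{\ominus,n,r}$ to counting admissible sequences with $r$ zeros via Proposition~\ref{prop:sign} and Lemmas~\ref{lem:DNE}--\ref{lem:admissible}, then subtract the alternating sequences by a short mod-$3$ case check. Your write-up is slightly more explicit in one place---you spell out why distinct sign patterns yield distinct functions over an infinite field---while the paper leaves this implicit; and your case analysis is organized by the zero-count formulas $\lceil (n+1)/2\rceil$ and $\lfloor (n+1)/2\rfloor$ rather than by listing the four alternating patterns, but the content is identical.
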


\begin{proof}
Lemma~\ref{lem:DNE} gives $C_{\ominus,n,r}=0$ if $n+r\not\equiv1\pmod3$.
Assume $n+r\equiv1\pmod 3$ below.

By definition, $C_{\ominus,n,r}\le \binom{n+1}{r}$.
Lemma~\ref{lem:alternating} and \ref{lem:admissible} imply that $\binom{n+1}{r} -C_{\ominus,n,r}$ equals the number of sequences $\mathbf{d} := (d_0,d_1,\ldots,d_n)\in\{0,1\}^{n+1}$ such that $\{i: 0\le i\le n,\ d_i=0\}|=r$ and $d_j\ne d_{j+1}$ for all $j\in\{0,1,\ldots,n-1\}$.
Such a sequence $\mathbf{d}$ must be alternating and we distinguish the following cases.

\vskip3pt\noindent\textsf{Case 1.}
If $\mathbf{d} = (1,0,1,0,\ldots,0,1)$ then $n=2r$ and $n+r=3r\not\equiv1\pmod 3$.

\vskip3pt\noindent\textsf{Case 2.}
If $\mathbf{d} = (1,0,1,0,\ldots,1,0)$ or $\mathbf{d} = (0,1,0,1,\ldots,0,1)$ then we have $n=2r-1$ and $n+r\equiv 3r-1\not\equiv 1\pmod 3$.

\vskip3pt\noindent\textsf{Case 3.}
If $\mathbf{d} = (0,1,0,1,\ldots,1,0)$ then $n=2r-2$ and $n+r\equiv 3r-2\equiv 1\pmod 3$.

Thus 
\[ \binom{n+1}{r} -C_{\ominus,n,r} = 
\begin{cases}
0, & n\ne 2r-2; \\
1, & n = 2r-2.
\end{cases}\]
This establishes the desired formula for $C_{\ominus,n,r}$.
\end{proof}

Now we are ready to give the main result on the number $C_{\ominus,n}$.

\begin{theorem}\label{thm:ominus}
For all $n\ge1$ we have $C_{\ominus,n} = A_n$.
\end{theorem}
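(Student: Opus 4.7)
The plan is to derive the closed form for $C_{\ominus,n}$ by summing the refined count $C_{\ominus,n,r}$ given by Theorem~\ref{thm:Cnr} over all $r$, according to the refinement identity \eqref{eq:refine}, and then match the result with the closed formula \eqref{eq:An} for $A_n$.

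By Theorem~\ref{thm:Cnr}, every term $C_{\ominus,n,r}$ vanishes unless $n+r\equiv 1\pmod 3$, and for such $r$ the term equals $\binom{n+1}{r}$ except in the single case $n=2r-2$, where it equals $\binom{n+1}{r}-1$. The condition $n=2r-2$ forces $n$ to be even and $r=n/2+1$; one checks directly that $n+r = 3n/2+1 \equiv 1 \pmod 3$ is then automatic, so this exceptional case occurs (exactly once) if and only if $n$ is even. Consequently
\[
C_{\ominus,n} \;=\; S_n - \epsilon_n, \qquad
S_n := \!\!\sum_{\substack{0\le r\le n+1 \\ n+r\equiv 1\,(\mathrm{mod}\,3)}}\!\! \binom{n+1}{r}, \qquad
\epsilon_n = \begin{cases} 1, & n \text{ even},\\ 0, & n \text{ odd}. \end{cases}
\]

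Next, I would evaluate the ``skipping sum'' $S_n$ by the standard roots-of-unity filter. Let $\omega=e^{2\pi i/3}$, so that $1+\omega=-\omega^2$, $1+\omega^2=-\omega$, and $\omega+\omega^2=-1$. Then
\[
S_n \;=\; \frac{1}{3}\sum_{j=0}^{2}\omega^{j(n-1)}(1+\omega^j)^{n+1},
\]
and a short computation collapses this to $S_n=\dfrac{2^{n+1}+(-1)^n}{3}$. Plugging back in and using \eqref{eq:An}: when $n$ is odd, $\epsilon_n=0$ gives $C_{\ominus,n}=(2^{n+1}-1)/3=A_n$; when $n$ is even, $C_{\ominus,n}=(2^{n+1}+1)/3-1=(2^{n+1}-2)/3=A_n$.

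The only real obstacle is bookkeeping: verifying that the exceptional entry in Theorem~\ref{thm:Cnr} really is captured by the indicator $\epsilon_n$ (i.e., that $r=n/2+1$ always lies in the residue class $n+r\equiv 1\pmod 3$ when $n$ is even, and that it falls in the valid range $0\le r\le n+1$), and handling the parity cases of $(-1)^n$ cleanly so that the correction term $-\epsilon_n$ converts the ``$+(-1)^n$'' in $S_n$ into the desired ``$-1$'' or ``$-2$'' in the numerator of $A_n$. No other technical difficulty arises, and as an alternative one could bypass roots of unity entirely by verifying the recurrence $C_{\ominus,n}=2^n-1-C_{\ominus,n-1}$ directly from Theorem~\ref{thm:Cnr} together with the Pascal identity $\binom{n+1}{r}=\binom{n}{r}+\binom{n}{r-1}$, which matches the second characterization of $A_n$ listed in Section~\ref{sec:intro}.
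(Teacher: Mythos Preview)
Your proposal is correct and follows essentially the same route as the paper: you sum the refined counts from Theorem~\ref{thm:Cnr} via \eqref{eq:refine}, separate off the single exceptional entry as a parity-dependent correction (the paper packages this as $C'_{\ominus,n}:=C_{\ominus,n}+\epsilon_n$ rather than $C_{\ominus,n}=S_n-\epsilon_n$, but it is the same move), and evaluate the resulting congruence-restricted binomial sum with the cubic roots-of-unity filter to obtain $(2^{n+1}+(-1)^n)/3$. Your closing remark about the alternative recurrence $C_{\ominus,n}=2^n-1-C_{\ominus,n-1}$ via Pascal's identity also matches the paper's second proof.
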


\begin{proof}
Let $\omega=e^{2\pi i/3}$ be a primitive cubic root of unity in $\CC$. 
Substituting $x=1, \omega, \omega^2$ in the binomial expansion of $(1+x)^n$ 
we obtain
\[ 2^n+\omega^{-k}(1+\omega)^n+\omega^k(1+\omega^2)^n = \sum_i \binom{n}{i}(1+\omega^{i-k} + \omega^{2i+k} ). \] 
We have
\[ 1+\omega^{i-k} + \omega^{2i+k} = 
\begin{cases}
1+1+1=3, & \text{ if $i\equiv k\pmod 3$}, \\
1+\omega+\omega^2 =0, & \text{ otherwise}.
\end{cases} \]
Hence
\begin{equation}\label{eq:skipping}
 \sum_{i\equiv k \!\!\!\! \pmod 3} \binom{n}{i} = \frac{2^n+\omega^{-k}(1+\omega)^n+\omega^k(1+\omega^2)^n}3. 
\end{equation}

For $n\ge1$, let 
\[ C'_{\ominus,n} :=
\begin{cases}
C_{\ominus,n}, & \text{if $n$ is odd}; \\
C_{\ominus,n}+1, & \text{if $n$ is even}.
\end{cases}\]
It follows from Equation~\eqref{eq:refine}, Theorem~\ref{thm:Cnr}, and Equation~\eqref{eq:skipping} that
\begin{align*}
C'_{\ominus,n} &= \sum_{r\equiv 1-n \!\!\!\! \pmod 3} \binom{n+1}{r} \\
&= \frac{2^{n+1}+\omega^{n-1}(1+\omega)^{n+1}+\omega^{1-n}(1+\omega^2)^{n+1}}3 \\
&= \frac{2^{n+1}+\omega^{n-1}(-\omega^2)^{n+1} + \omega^{1-n}(-\omega)^{n+1} } 3 \\
&= \frac{2^{n+1}+(-1)^{n+1} \omega + (-1)^{n+1} \omega^{2} } 3 
= \frac{ 2^{n+1} + (-1)^n} 3.\\
\end{align*}
This implies that $C_{\ominus,n}$ satisfies the same closed formulas~\eqref{eq:An} as $A_n$.
\end{proof}

We are grateful to the anonymous referee for pointing out the following simpler proof of Theorem~\ref{thm:ominus}.

\begin{proof}[Another Proof]
We have $C_{\ominus,1}=A_1=1$.
For $n\ge2$, applying Theorem~\ref{thm:Cnr} we obtain
\begin{align*}
C_{\ominus,n} + C_{\ominus,n-1} + 1 & = 
\sum_{ r\equiv 1-n\!\!\!\! \pmod 3 } \binom{n+1}{r} 
+ \sum_{r \equiv 2-n\!\!\!\! \pmod 3 } \binom{n}{r}\\
& = \sum_{ r\equiv 1-n \!\!\!\! \pmod 3 } \left( \binom{n}{r-1} + \binom{n}{r} \right) 
+ \sum_{ r\equiv 2-n \!\!\!\! \pmod 3 } \binom{n}{r}\\
& = \sum_{0\le r \le n} \binom{n}{r} = 2^n.
\end{align*}
Thus $C_{\ominus,n}$ satisfies the same recurrence relation as $A_n=2^n-1-A_{n-1}$ for $n\ge2$.
\end{proof}

Finally, we characterize all distinct results from parenthesizing $x_0\ominus x_1\ominus\cdots\ominus x_n$.

\begin{proposition}\label{prop:result}
The results from parenthesizing $x_0\ominus x_1\ominus \cdots \ominus x_n$ are precisely
\[ (-1)^{d_0} x_0 + (-1)^{d_1} x_1 + \cdots + (-1)^{d_n} x_n \] 
for all admissible sequences $(d_0,d_1,\ldots,d_n)\in\{0,1\}^{n+1}$.
Consequently, sequence \seqnum{A000975} enumerates admissible binary sequences.
\end{proposition}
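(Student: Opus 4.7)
The plan is to read off this proposition almost directly from Proposition~\ref{prop:sign} together with the three lemmas of Section~\ref{sec:prelim}. First I would observe that, since $x_0,x_1,\ldots,x_n$ are indeterminates taking values in the infinite field $\CC$, two expressions of the form $\sum_{i=0}^n \varepsilon_i x_i$ and $\sum_{i=0}^n \varepsilon'_i x_i$ with $\varepsilon_i,\varepsilon'_i\in\{\pm 1\}$ define the same function $\CC^{n+1}\to\CC$ if and only if $\varepsilon_i=\varepsilon'_i$ for every $i$. Combining this with Proposition~\ref{prop:sign}, two parenthesizations of $x_0\ominus x_1\ominus\cdots\ominus x_n$ corresponding to trees $s,t\in\T_n$ yield the same result precisely when $d_i(s)\equiv d_i(t)\pmod 2$ for all $i$. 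Therefore the set of distinct results is in bijection with the set of depth parity sequences $(d_0(t)\bmod 2,\ldots,d_n(t)\bmod 2)$ as $t$ ranges over $\T_n$.

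Next, I would identify this set of parity sequences with the set of admissible binary sequences in $\{0,1\}^{n+1}$. For the forward inclusion, given $t\in\T_n$, Lemma~\ref{lem:alternating} supplies some index $j$ with $d_j(t)=d_{j+1}(t)$, forcing the parity sequence to be non-alternating; and Lemma~\ref{lem:DNE}, applied with $r:=|\{i:d_i(t)\text{ is even}\}|$, gives the congruence $n+r\equiv 1\pmod 3$ required by the definition of admissibility. For the reverse inclusion, Lemma~\ref{lem:admissible} is exactly the statement that every admissible sequence is congruent modulo $2$ to the depth sequence of some $t\in\T_n$. The two inclusions together prove the first claim of the proposition.

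For the "consequently" part, I would simply combine the bijection above with Theorem~\ref{thm:ominus}: the number of admissible binary sequences of length $n+1$ equals $C_{\ominus,n}$, which is $A_n$, so the sequence \seqnum{A000975} enumerates admissible binary sequences.

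There is no substantive obstacle in this argument, since each half of the bijection has already been packaged as a lemma. The only point requiring care is the justification that distinctness of parenthesization results (viewed as functions $S^{n+1}\to S$) is the same as distinctness of the underlying sign, hence parity, sequences; this is where the hypothesis that $\ominus$ is defined on an infinite field (so that a nontrivial linear form cannot vanish identically) is used.
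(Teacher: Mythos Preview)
Your proposal is correct and follows essentially the same route as the paper: both directions are read off from Proposition~\ref{prop:sign} together with Lemmas~\ref{lem:DNE}, \ref{lem:alternating}, and \ref{lem:admissible}, and the enumeration statement then follows from Theorem~\ref{thm:ominus}. Your explicit remark that distinct sign sequences yield distinct functions over an infinite field is a point the paper leaves implicit, but otherwise the arguments coincide.
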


\begin{proof}
Any result from parenthesizing $x_0\ominus x_1\ominus \cdots \ominus x_n$ can be written as
\[ (-1)^{d_0(t)} x_0 + (-1)^{d_1(t)} x_1 + \cdots + (-1)^{d_n(t)} x_n \] 
where $t\in\T_n$ corresponds to this parenthesization.
Let $(d_0,\ldots,d_n)\in\{0,1\}^{n+1}$ such that $(d_0,\ldots,d_n)\equiv(d_0(t),\ldots,d_n(t)) \pmod2$. 
Then $(d_0,\ldots,d_n)$ must be admissible by Lemma~\ref{lem:DNE} and \ref{lem:alternating}.

Conversely, if $(d_0,\ldots,d_n)\in\{0,1\}^{n+1}$ is admissible then there exists $t\in\T_n$ such that $d(t)\equiv(d_0,\ldots,d_n)\pmod2$ by Lemma~\ref{lem:admissible}.
Thus the parenthesization of $x_0\ominus x_1\ominus \cdots \ominus x_n$ corresponding $t$ equals
$(-1)^{d_0} x_0 + (-1)^{d_1} x_1 + \cdots + (-1)^{d_n} x_n$.
\end{proof}

\section{Questions}\label{sec:questions}

In this section we present some problems for future study.

First, similarly to many other enumeration problems, it would be nice to have explicit bijections between distinct results from parenthesizations of $x_0\ominus x_1\ominus \cdots \ominus x_n$, which are naturally encoded by admissible sequences in $\{0,1\}^{n+1}$, and other objects enumerated by sequence \seqnum{A000975}, such as those mentioned in Section~\ref{sec:intro}.
Such bijections may help find a direct proof for the closed formulas of $C_{\ominus,n}$ without using the refined number $C_{\ominus,n,r}$. 

Next, $C_{\ominus,n}$ satisfies the recurrent relations for $A_n$ mentioned in Section 1 since $C_{\ominus,n}=A_n$. 
The second proof of Theorem~\ref{thm:ominus} establishes one of these recurrence relations directly for $C_{\ominus,n}$. 
How about the other recurrent relations? 

It also seems to natural to explore the generating function 
\[ C_{\ominus}(x,y) := \sum_{n\ge0} \sum_{0\le r\le n+1} C_{\ominus,n,r} x^ny^r. \]
Its specializations $C_\ominus(x,1)$ is known to be
\[ C_\ominus(x,1) = \sum_{n\ge0} C_{\ominus,n} x^n 
= \frac{1}{(1+x)(1-x)(1-2x)}\] 
which gives the generating function for the sequence \seqnum{A000975} (although $A_0=0$ differs from $C_{\ominus,0}=1$).

Lastly, the results from parenthesizations of $x_0\ominus x_1\ominus\cdots\ominus x_n$ are determined by the depths of leaves in the corresponding binary trees.
What is the average leaf depth in all binary trees with $n+1$ leaves?

\section{Acknowledgments}
We are grateful to the anonymous referee for providing many valuable suggestions, which helped us improve the organization of this paper and simplified some proofs.

\bigskip
\hrule
\bigskip

\noindent 2010 {\it Mathematics Subject Classification}: Primary 05A15.

\noindent \emph{Keywords:}
Binary tree, leaf depth, binary sequence, nonassociativity, parenthesization.

\bigskip
\hrule
\bigskip

\noindent (Concerned with sequences
\seqnum{A000217}, 
\seqnum{A000975},
\seqnum{A048702}, 
 \seqnum{A155051}, and
\seqnum{A265158}.)

\end{document}